\documentclass[11pt,a4paper]{article}

\usepackage[a4paper,margin=28mm]{geometry}
\usepackage{amsmath,amssymb,amsthm}
\usepackage{xcolor}
\usepackage{url}
\usepackage{hyperref}
\usepackage{microtype}

\hypersetup{
  colorlinks=true,
  linkcolor=blue,
  citecolor=blue,
  urlcolor=blue
}

\newtheorem{thm}{Theorem}[section]

\newcommand{\R}{\mathbb{R}}

\newcommand{\rmd}{\mathrm{d}}

\renewcommand{\bar}{\overline}

\title{Modified ruin probability for a Cram{\'e}r-Lundberg model driven by a compound mixed Poisson process}

\author{Noriyoshi Sakuma\thanks{Department of Mathematics, Graduate School of Science, The University of Osaka, 1 Machikaneyama-cho, Toyonaka, Osaka 560-0043, JAPAN; Graduate School of Science, Nagoya City University, 1 Yamanohata, Mizuho-cho, Mizuho-ku, Nagoya, Aichi 467-8501, Japan. E-mail: \texttt{sakuma.noriyoshi.sci@osaka-u.ac.jp}}\and Momoka Tashiro\thanks{Graduate School of Science, Nagoya City University, 1 Yamanohata, Mizuho-cho, Mizuho-ku, Nagoya, Aichi 467-8501, Japan. E-mail: \texttt{}}}
\date{}

\newcommand{\keywords}[1]{\par\medskip\noindent\textbf{Keywords.} #1\par\medskip}
\newcommand{\references}{}

\begin{document}
\maketitle

\begin{abstract}
We study modified ruin probabilities in a Cramér-Lundberg model driven by a compound mixed Poisson process. In the heavy-tailed regime, if the integrated claim-size distribution is subexponential and the upper endpoint of the mixing distribution stays below the net-profit boundary, the modified and classical ruin probabilities are asymptotically equivalent. In the light-tailed regime, we prove a fixed-intensity ratio theorem and obtain both an endpoint-atom result and a sharp endpoint-density asymptotic with an explicit constant.
\end{abstract}

\keywords{mixed Poisson process, modified ruin probability, subexponential tail, Cramér asymptotics}

\section{Introduction}

Modified ruin probabilities extend classical ruin by allowing the surplus process to survive a negative excursion according to a prescribed rule; see \cite{AB}. This framework contains, for example, Parisian and cumulative Parisian ruin as special cases; see \cite{DassiosWu08,GR}. These results are usually formulated for a fixed arrival mechanism. Separately, mixed Poisson claim arrivals model heterogeneity in the claim intensity; see \cite{Grandell,Mikosch}. For classical ruin probabilities, such heterogeneity changes the averaged prefactor in the heavy-tailed case and, in the light-tailed case, makes the largest admissible intensities near the endpoint of the mixing distribution asymptotically dominant. The combination is not covered by the two theories separately, because the modified ruin rule depends on the overshoot at ruin, whereas the mixture selects different adjustment coefficients through the random intensity.

This note identifies that interaction in both tail regimes. In the heavy-tailed case we prove that the modified and classical mixed ruin probabilities are asymptotically equivalent. In the light-tailed Cram{\'e}r case we first isolate the fixed-intensity overshoot mechanism, then show that an atom at the upper endpoint of the mixing distribution gives the leading term, and finally obtain a sharp explicit constant under a regular endpoint-density assumption. Thus the light-tailed results clarify how intensity heterogeneity and modified-ruin survival jointly determine the leading asymptotics. Endpoints without either an atom or a regular density remain an interesting separate problem.

\section{Model}

Let
\[
U_t=u+ct-\sum_{i=1}^{N_t}X_i,\qquad t\ge0,
\]
where $u>0$ and $c>0$. The claim sizes $\{X_i\}_{i\ge1}$ are i.i.d.\ positive random variables with distribution function $F$, tail $\bar F=1-F$, and mean $\mu=\mathbb{E}[X_1]$. The claim number process $\{N_t\}_{t\ge0}$ is a mixed Poisson process with mixing variable $\Lambda$ and distribution function $G$, independent of the claim sizes. Write
\[
\ell_1:=\inf\{\ell\ge0:G(\ell)=1\}.
\]

Let
\[
T:=\inf\{t>0:U_t<0\}
\]
be the classical ruin time. For a fixed intensity $\ell$, set
\[
\psi_{\rm cl}(u,\ell):=\mathbb{P}_u(T<\infty\,|\, \Lambda=\ell).
\]
The mixed-Poisson classical ruin probability is
\[
\psi_{\rm cl}(u):=\int_{[0,\infty)}\psi_{\rm cl}(u,\ell)\,G(\rmd \ell),
\]
with the convention $\psi_{\rm cl}(u,\ell)=1$ when $\ell\mu\ge c$. Following \cite{AB}, a modified ruin probability is a measurable function $\psi:\R\to[0,1]$ such that
\begin{equation}\label{eq:modified}
\psi(u)=\int_{(-\infty,0)}\psi(y)\,\mathbb{P}_u(U_T\in \rmd y,\,T<\infty),\qquad u\ge0.
\end{equation}
For later use, define also
\[
\psi_\ell(u):=\int_{(-\infty,0)}\psi(y)\,\mathbb{P}_u(U_T\in \rmd y,\,T<\infty\,|\, \Lambda=\ell),
\]
so that $\psi(u)=\int_{[0,\infty)}\psi_\ell(u)\,G(\rmd\ell)$ and $0\le \psi_\ell(u)\le \psi_{\rm cl}(u,\ell)$.

\section{Heavy-tailed asymptotics}

Define the integrated tail distribution by
\[
F_I(x):=\frac{1}{\mu}\int_0^x \bar F(y)\,\rmd y,\qquad x\ge0.
\]

\begin{thm}\label{thm:heavy}
Assume that $\ell_1<c/\mu$, that $F_I\in\mathcal S$, and that
\[
\lim_{y\to-\infty}\psi(y)=1.
\]
Then
\[
\psi(u)\sim\psi_{\rm cl}(u)\sim\mathbb{E}\!\left[\frac{\Lambda\mu}{c-\Lambda\mu}\right]\bar F_I(u),\qquad u\to\infty.
\]
\end{thm}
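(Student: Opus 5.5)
The argument has two parts. First we get the classical mixed asymptotics from the fixed-intensity Embrechts--Veraverbeke theorem and dominated convergence. Then we compare $\psi$ with $\psi_{\rm cl}$ through the overshoot at ruin, using $\psi\le\psi_{\rm cl}$ for the upper bound.

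\textbf{Classical asymptotics.} For fixed $\ell<c/\mu$ the Pollaczek--Khinchine formula gives
\[
\psi_{\rm cl}(u,\ell)=(1-\rho_\ell)\sum_{n\ge1}\rho_\ell^n\bar F_I^{*n}(u),\qquad \rho_\ell=\ell\mu/c<1.
\]
Subexponentiality of $F_I$ then yields $\psi_{\rm cl}(u,\ell)/\bar F_I(u)\to \rho_\ell/(1-\rho_\ell)=\ell\mu/(c-\ell\mu)$.

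To integrate against $G$, we need a bound uniform in $\ell\in[0,\ell_1]$, where $\rho_\ell\le\rho_{\ell_1}<1$. Kesten's bound gives, for any $\varepsilon>0$, a constant $K_\varepsilon$ with $\bar F_I^{*n}(u)\le K_\varepsilon(1+\varepsilon)^n\bar F_I(u)$. Choose $\varepsilon$ with $\rho_{\ell_1}(1+\varepsilon)<1$. Then
\[
\psi_{\rm cl}(u,\ell)/\bar F_I(u)\le K_\varepsilon\sum_n(\rho_{\ell_1}(1+\varepsilon))^n<\infty
\]
uniformly in $\ell$. Dominated convergence gives $\psi_{\rm cl}(u)\sim \mathbb{E}[\Lambda\mu/(c-\Lambda\mu)]\bar F_I(u)$. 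This expectation is finite, since $\Lambda\le\ell_1$ a.s., and it is positive unless $\Lambda=0$ a.s.; we exclude that degenerate case or treat it trivially.

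\textbf{Comparison.} The upper bound $\psi(u)\le\psi_{\rm cl}(u)$ is immediate. For the lower bound, fix $\delta>0$ and choose $M$ with $\psi(y)\ge1-\delta$ for $y<-M$. Then
\[
\psi_\ell(u)\ge(1-\delta)\,\mathbb{P}_u(U_T<-M,\,T<\infty\mid\Lambda=\ell)=(1-\delta)\bigl[\psi_{\rm cl}(u,\ell)-\mathbb{P}_u(-M\le U_T<0,\,T<\infty\mid\Lambda=\ell)\bigr].
\]
So it suffices to show that, for each fixed $\ell$ and $M$,
\[
\mathbb{P}_u(-M\le U_T<0,\,T<\infty\mid\Lambda=\ell)=o(\bar F_I(u)),
\]
with a domination in $\ell$ by $\psi_{\rm cl}(u,\ell)/\bar F_I(u)$, which we already have. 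Dominated convergence then gives $\liminf\psi(u)/\psi_{\rm cl}(u)\ge1-\delta$, and we let $\delta\downarrow0$.

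\textbf{Main obstacle: the small-overshoot estimate.} Use the ladder-height representation. Ruin occurs at the $N$-th ladder epoch, where $N$ is geometric with parameter $\rho_\ell$ and ladder heights are i.i.d.\ with law $F_I$. The event $\{-M\le U_T<0\}$ says that the ladder sum first exceeds $u$ and does so by at most $M$. Let $S_{k-1}\le u$ be the ladder sum before the crossing step. The probability that the next ladder height lands in $(u-S_{k-1},u-S_{k-1}+M]$ is $F_I(x,x+M]$ with $x=u-S_{k-1}$. Summing over $k$ and integrating,
\[
\mathbb{P}_u(-M\le U_T<0,\,T<\infty\mid\Lambda=\ell)\le\sum_{k\ge0}\rho_\ell^{k+1}\int_{[0,u]}F_I(u-s,u-s+M]\,F_I^{*k}(\rmd s).
\]
Since $F_I\in\mathcal S$ is long-tailed, $F_I(x,x+M]=o(\bar F_I(x))$ as $x\to\infty$. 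Split the integral at $s\le u-A$ and $s>u-A$.

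For $s\le u-A$, bound $F_I(u-s,u-s+M]\le\eta\,\bar F_I(u-s)$ with $\eta$ small once $A$ is large. The resulting sum is at most $\eta\sum_k\rho_\ell^{k+1}\bar F_I^{*(k+1)}(u)$, which is $O(\eta\,\bar F_I(u))$ by the Kesten bound again.

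For $s>u-A$, the integral is at most $F_I^{*k}(u-A,u]$. By long-tailedness of $F_I^{*k}$, whose tail is asymptotically $k\bar F_I$, this is $o(\bar F_I(u))$ for each $k$, with a Kesten-type geometric domination in $k$. Letting $u\to\infty$ and then $\eta\to0$ gives the $o(\bar F_I(u))$ estimate, and the uniform domination in $\ell$ comes from $\rho_\ell\le\rho_{\ell_1}$.

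Combining the two parts yields $\psi(u)\sim\psi_{\rm cl}(u)\sim\mathbb{E}\!\left[\frac{\Lambda\mu}{c-\Lambda\mu}\right]\bar F_I(u)$.
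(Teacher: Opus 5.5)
Your proof is correct, and its skeleton matches the paper's:
\begin{itemize}
\item the fixed-intensity subexponential ruin asymptotic;
\item divergence of the overshoot at ruin, so that $\psi_\ell(u)\sim\psi_{\rm cl}(u,\ell)$ (your estimate $\mathbb{P}_u(-M\le U_T<0,\,T<\infty\mid\Lambda=\ell)=o(\bar F_I(u))$ is exactly this statement);
\item dominated convergence over $G$.
\end{itemize}

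You differ from the paper in how the two key inputs are obtained.
\begin{itemize}
\item The paper cites the Embrechts--Veraverbeke asymptotic. It gets overshoot divergence in one line from Schmidli's Theorem~2, which needs only $\psi_{\rm cl}(u+z,\ell)/\psi_{\rm cl}(u,\ell)\to1$. It dominates via monotonicity of $\psi_{\rm cl}(u,\ell)$ in $\ell$ together with the asymptotic at $\ell_1$, for large $u$.
\item You derive everything from the Pollaczek--Khinchine ladder-height representation. Kesten's bound with $\rho_{\ell_1}(1+\varepsilon)<1$ gives both the fixed-$\ell$ limit and a domination uniform in $\ell\in[0,\ell_1]$ and $u\ge0$. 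You prove the small-overshoot estimate by hand through the split at $u-A$.
\end{itemize}

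The details you sketch for that split do close. The $k=0$ term of the near piece vanishes for $u>A$. For large $u$, the bound $F_I^{*k}(u-A,u]\le\bar F_I^{*k}(u-A)\le 2K_\varepsilon(1+\varepsilon)^k\bar F_I(u)$ supplies the summable domination in $k$ that you assert.

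The paper's route is shorter and modular, since the same Schmidli result is reused for the limiting overshoot law in Theorem~\ref{thm:light-fixed}. Yours is self-contained, relying only on the PK formula and standard properties of $\mathcal S$, and it makes the uniformity in $\ell$ explicit.

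Your remark that the case $\Lambda=0$ a.s.\ must be excluded or treated separately is a fair point that the paper passes over.
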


\begin{proof}
Fix $\ell\in(0,\ell_1]$. The standard subexponential ruin asymptotic for the fixed-rate Cramér-Lundberg model gives
\[
\psi_{\rm cl}(u,\ell)\sim a(\ell)\bar F_I(u),\qquad a(\ell):=\frac{\ell\mu}{c-\ell\mu};
\]
see \cite{Mikosch,AsmussenAlbrecher10}. Since every subexponential distribution is long-tailed,
\[
\frac{\psi_{\rm cl}(u+z,\ell)}{\psi_{\rm cl}(u,\ell)}\to1,\qquad z\in\R.
\]
Hence Theorem~2 of Schmidli \cite{Schmidli} yields, for every $x\ge0$,
\[
\mathbb{P}_u(-U_T>x\,|\, T<\infty,\Lambda=\ell)\to1.
\]
Therefore
\[
\inf_{y\le -x}\psi(y)\,\mathbb{P}_u(-U_T>x\,|\, T<\infty,\Lambda=\ell)
\le \frac{\psi_\ell(u)}{\psi_{\rm cl}(u,\ell)}\le1,
\]
and letting first $u\to\infty$ and then $x\to\infty$ gives $\psi_\ell(u)\sim\psi_{\rm cl}(u,\ell)$.

Next, $\psi_{\rm cl}(u,\ell)$ is increasing in $\ell$, so $\psi_{\rm cl}(u,\ell)\le \psi_{\rm cl}(u,\ell_1)$. Since $\psi_{\rm cl}(u,\ell_1)\sim a(\ell_1)\bar F_I(u)$, we have for all large $u$,
\[
0\le \frac{\psi_\ell(u)}{\bar F_I(u)}\le \frac{\psi_{\rm cl}(u,\ell_1)}{\bar F_I(u)}\le 2a(\ell_1).
\]
Dominated convergence therefore yields
\begin{align*}
\psi(u)&=\int_{[0,\infty)}\psi_\ell(u)\,G(\rmd\ell)\\
&\sim \int_{[0,\infty)}a(\ell)\,G(\rmd\ell)\,\bar F_I(u)
=\mathbb{E}\!\left[\frac{\Lambda\mu}{c-\Lambda\mu}\right]\bar F_I(u).
\end{align*}
The same argument with $\psi_\ell(u)$ replaced by $\psi_{\rm cl}(u,\ell)$ gives the same asymptotic for $\psi_{\rm cl}(u)$, and the claim follows.
\end{proof}

\section{Light-tailed asymptotics}

\begin{thm}\label{thm:light-fixed}
Fix $\ell\in(0,c/\mu)$. Assume that
\[
\psi_{\rm cl}(u,\ell)\sim c_\ell e^{-R_\ell u},\qquad u\to\infty,
\]
for some constants $c_\ell,R_\ell>0$. If $\psi$ is continuous or monotone on $(-\infty,0)$, then
\[
\psi_\ell(u)\sim C_\ell\,\psi_{\rm cl}(u,\ell),\qquad u\to\infty,
\]
where
\[
C_\ell:=\int_{(-\infty,0)}\psi(y)\,\nu_\ell(\rmd y)
\]
and the probability measure $\nu_\ell$ is determined by
\begin{align*}
&\nu_\ell(( -\infty,-x])
\\
&=\frac{1}{c-\ell\mu}\Bigl(ce^{-R_\ell x}
\\
&\qquad\qquad-\ell\int_0^x e^{-R_\ell(x-z)}\bar F(z)\,\rmd z
\\
&\qquad\qquad-\ell\int_x^{\infty}\bar F(z)\,\rmd z\Bigr),
\end{align*}
for $x\ge0$.
\end{thm}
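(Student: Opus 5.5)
Conditionally on $\Lambda=\ell$ the surplus is a classical Cram\'er-Lundberg process with rate $\ell$. Write $\psi_\ell(u)/\psi_{\rm cl}(u,\ell)$ as the integral of $\psi$ against the conditional deficit law
\[
\nu_{\ell,u}(\rmd y):=\mathbb{P}_u(U_T\in\rmd y\,|\,T<\infty,\Lambda=\ell).
\]
The plan is to show that $\nu_{\ell,u}\Rightarrow\nu_\ell$ weakly as $u\to\infty$, and then pass to the limit inside $\int\psi\,\rmd\nu_{\ell,u}$.

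\textbf{Step 1: a renewal equation.} I will use the standard joint renewal equation for ruin with deficit: for $x\ge0$,
\[
\phi_x(u):=\mathbb{P}_u(T<\infty,\,-U_T>x\,|\,\Lambda=\ell).
\]
Conditioning on the first ladder height, with ladder density $\frac{\ell}{c}\bar F(y)$ and defect $\ell\mu/c$, gives
\[
\phi_x(u)=\frac{\ell}{c}\int_0^u \phi_x(u-y)\bar F(y)\,\rmd y+\frac{\ell}{c}\int_{u+x}^\infty\bar F(y)\,\rmd y .
\]
Multiplying by $e^{R_\ell u}$ turns this into a proper renewal equation. Here the Lundberg equation $\frac{\ell}{c}\int_0^\infty e^{R_\ell y}\bar F(y)\,\rmd y=1$ holds, because the hypothesis $\psi_{\rm cl}(u,\ell)\sim c_\ell e^{-R_\ell u}$ with $R_\ell>0$ forces $R_\ell$ to be the adjustment coefficient. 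I then apply the key renewal theorem, as in the classical derivation of the Cram\'er-Lundberg constant. It yields
\[
e^{R_\ell u}\phi_x(u)\to \frac{\frac{\ell}{c}\int_0^\infty e^{R_\ell v}\int_{v+x}^\infty\bar F(y)\,\rmd y\,\rmd v}{m_\ell},
\]
where $m_\ell=\frac{\ell}{c}\int_0^\infty y e^{R_\ell y}\bar F(y)\,\rmd y$. The case $x=0$ gives $c_\ell$.

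\textbf{Step 2: identify the limit.} Dividing by $c_\ell$ gives $\nu_\ell((-\infty,-x])$. I will then verify that this ratio equals the stated expression. The computation proceeds by substituting $z=y-x$, using Fubini, and applying the Lundberg equation to simplify. Part of this is checking that the value is $1$ at $x=0$, which amounts to $c\cdot1-\ell\mu$ divided by $c-\ell\mu$. This algebra is where I expect the main bookkeeping obstacle: the formula in the statement is already normalized with $c-\ell\mu$, so Fubini must be used carefully to match it. Direct Riemann integrability of the forcing term $e^{R_\ell u}\frac{\ell}{c}\int_{u+x}^\infty\bar F$ also needs checking. I would handle this by bounding it by a monotone-times-exponential function that is integrable; integrability follows from the finiteness of the Lundberg integral, and monotone decrease of the tail part gives dRi.

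\textbf{Step 3: pass to the limit in $\psi$.} Pointwise convergence of the tails at every $x\ge0$ gives weak convergence $\nu_{\ell,u}\Rightarrow\nu_\ell$. The limit $\nu_\ell$ has a continuous distribution function, so it has no atoms.
\begin{itemize}
\item If $\psi$ is bounded and continuous on $(-\infty,0)$, I extend it suitably near $0$. No mass of $\nu_{\ell,u}$ escapes to $0$, since $\nu_{\ell,u}([-\varepsilon,0))\to\nu_\ell([-\varepsilon,0))$, which is small. Weak convergence then gives $\int\psi\,\rmd\nu_{\ell,u}\to C_\ell$.
\item If $\psi$ is monotone, its discontinuities are countable and hence $\nu_\ell$-null. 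The continuous mapping theorem, or the portmanteau theorem for bounded functions continuous $\nu_\ell$-a.e., gives the same conclusion.
\end{itemize}
Since $\psi_\ell(u)=\psi_{\rm cl}(u,\ell)\int\psi\,\rmd\nu_{\ell,u}$, this yields $\psi_\ell(u)\sim C_\ell\,\psi_{\rm cl}(u,\ell)$.
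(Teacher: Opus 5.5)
Your proof is correct, but it reaches the limit law $\nu_\ell$ by a different route from the paper. The paper never writes down a renewal equation. It turns the hypothesis into the ratio limit $\psi_{\rm cl}(u+z,\ell)/\psi_{\rm cl}(u,\ell)\to e^{-R_\ell z}$ and feeds this into Theorem~2 of Schmidli. That theorem directly returns the weak convergence of $\mathcal L(U_T\,|\,T<\infty,\Lambda=\ell)$ to $\nu_\ell$. Your Step~3 then matches the paper's last lines, and you are more careful about a $\psi$ that need not extend continuously to $0$.

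You instead rebuild that limit from the defective renewal equation for $\phi_x$ and the key renewal theorem, i.e.\ you reprove the Cram\'er--Lundberg approximation with the deficit attached. Your algebra checks out. The limit tail is $\frac{\ell}{c-\ell\mu}\int_x^\infty(e^{R_\ell(y-x)}-1)\bar F(y)\,\rmd y$. Replacing $\ell\int_x^\infty e^{R_\ell y}\bar F(y)\,\rmd y$ by $c-\ell\int_0^x e^{R_\ell y}\bar F(y)\,\rmd y$ gives exactly the stated formula.

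Your route is self-contained and shows that the term $ce^{-R_\ell x}$ is just an encoding of the Lundberg equation. The paper's route is shorter, and it uses the same tool as the heavy-tailed theorem (Schmidli's theorem with ratio limit $1$). There the hypothesis enters only as a ratio limit.

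The one step you assert rather than prove is that the hypothesis forces the Lundberg equation; it deserves a line. Set $h=\frac{\ell}{c}\bar F$, $z(u)=\frac{\ell}{c}\int_u^\infty\bar F(y)\,\rmd y$, $\hat f(s)=\int_0^\infty e^{su}f(u)\,\rmd u$, and let $\hat\psi_{\rm cl}$ be the transform of $\psi_{\rm cl}(\cdot,\ell)$. The case $x=0$ of your renewal equation gives $\hat\psi_{\rm cl}(s)(1-\hat h(s))=\hat z(s)>0$ for $s<R_\ell$, so $\hat h(R_\ell)\le 1$. Suppose $\hat h(R_\ell)<1$. Because $c_\ell>0$, $\hat\psi_{\rm cl}(s)\to\infty$ as $s\uparrow R_\ell$, so $\hat z(s)\ge(1-\hat h(R_\ell))\hat\psi_{\rm cl}(s)\to\infty$. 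This contradicts $\hat z(R_\ell)=(\hat h(R_\ell)-\ell\mu/c)/R_\ell<\infty$. Similarly, $m_\ell<\infty$ is forced, since otherwise the key renewal theorem would give $c_\ell=0$.
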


\begin{proof}
From the assumed asymptotic,
\[
\frac{\psi_{\rm cl}(u+z,\ell)}{\psi_{\rm cl}(u,\ell)}\to e^{-R_\ell z},\qquad z\in\R.
\]
Theorem~2 of Schmidli \cite{Schmidli} therefore implies that $\mathcal L(U_T\,|\, T<\infty,\Lambda=\ell)$ converges weakly to a probability measure $\nu_\ell$ whose tail is given above. The displayed formula is continuous in $x$, so $\nu_\ell$ has no atoms. Since
\[
\frac{\psi_\ell(u)}{\psi_{\rm cl}(u,\ell)}=\int_{(-\infty,0)}\psi(y)\,\mathbb{P}_u(U_T\in\rmd y\,|\, T<\infty,\Lambda=\ell),
\]
the weak convergence yields the stated asymptotic when $\psi$ is continuous. The monotone case follows as well because the limit law is continuous.
\end{proof}

\begin{thm}[Endpoint atom case]\label{thm:light-mixed}
Assume that $\ell_1<c/\mu$, that $p_1:=G(\{\ell_1\})>0$, and that the assumptions of Theorem~\ref{thm:light-fixed} hold with $\ell=\ell_1$. In addition, assume that for every $\ell\in(0,\ell_1)$ there exists $R_\ell>R_{\ell_1}$ such that
\[
\psi_{\rm cl}(u,\ell)\le e^{-R_\ell u},\qquad u\ge0.
\]
If $\psi$ is continuous or monotone on $(-\infty,0)$, then
\[
\psi(u)\sim C_{\ell_1}\,\psi_{\rm cl}(u),\qquad u\to\infty.
\]
\end{thm}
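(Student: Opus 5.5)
The plan is to split the mixing integral at the endpoint atom and show that the atom dominates both $\psi(u)$ and $\psi_{\rm cl}(u)$. Write
\[
\psi(u)=p_1\psi_{\ell_1}(u)+\int_{[0,\ell_1)}\psi_\ell(u)\,G(\rmd\ell),\qquad
\psi_{\rm cl}(u)=p_1\psi_{\rm cl}(u,\ell_1)+\int_{[0,\ell_1)}\psi_{\rm cl}(u,\ell)\,G(\rmd\ell).
\]
Theorem~\ref{thm:light-fixed} applied at $\ell=\ell_1$ gives $\psi_{\ell_1}(u)\sim C_{\ell_1}\psi_{\rm cl}(u,\ell_1)$, and by assumption $\psi_{\rm cl}(u,\ell_1)\sim c_{\ell_1}e^{-R_{\ell_1}u}$. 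Since $0\le\psi_\ell\le\psi_{\rm cl}(\cdot,\ell)$, it then suffices to prove
\[
e^{R_{\ell_1}u}\int_{[0,\ell_1)}\psi_{\rm cl}(u,\ell)\,G(\rmd\ell)\to0,\qquad u\to\infty.
\]
Given this, both remainders are $o(e^{-R_{\ell_1}u})$. Hence $\psi(u)\sim p_1C_{\ell_1}c_{\ell_1}e^{-R_{\ell_1}u}$ and $\psi_{\rm cl}(u)\sim p_1c_{\ell_1}e^{-R_{\ell_1}u}$, and the ratio is $C_{\ell_1}$.

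The negligibility will come from dominated convergence applied to
\[
f_u(\ell):=e^{R_{\ell_1}u}\psi_{\rm cl}(u,\ell),\qquad \ell\in[0,\ell_1).
\]
There are two ingredients.

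\emph{Pointwise convergence.} For each $\ell\in(0,\ell_1)$, the hypothesis gives $f_u(\ell)\le e^{-(R_\ell-R_{\ell_1})u}\to0$ because $R_\ell>R_{\ell_1}$. At $\ell=0$ there are no claims, so $\psi_{\rm cl}(u,0)=0$.

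\emph{Domination.} By the monotonicity of $\psi_{\rm cl}(u,\ell)$ in $\ell$ (already used in the proof of Theorem~\ref{thm:heavy}), $f_u(\ell)\le e^{R_{\ell_1}u}\psi_{\rm cl}(u,\ell_1)$. This is bounded by $2c_{\ell_1}$ for all large $u$, uniformly in $\ell$. Since $G$ is a probability measure, a constant is an integrable dominating function, and dominated convergence gives the displayed limit.

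The main obstacle I expect is exactly this uniformity near $\ell_1$. The gap $R_\ell-R_{\ell_1}$ may shrink to $0$ as $\ell\uparrow\ell_1$, so the pointwise bounds alone do not give uniform decay. Monotonicity in $\ell$ repairs this by supplying the uniform bound, which reduces the problem to pointwise decay and dominated convergence. I also need $p_1>0$ so that dividing by $p_1c_{\ell_1}e^{-R_{\ell_1}u}$ is legitimate. Continuity or monotonicity of $\psi$ enters only through Theorem~\ref{thm:light-fixed}. If $C_{\ell_1}=0$, the conclusion reads $\psi(u)=o(\psi_{\rm cl}(u))$, and the same argument covers it.
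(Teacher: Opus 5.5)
Your proof is correct and follows the paper's route: split off the atom at $\ell_1$, use $0\le\psi_\ell\le\psi_{\rm cl}(\cdot,\ell)$, and show $e^{R_{\ell_1}u}\int_{[0,\ell_1)}\psi_{\rm cl}(u,\ell)\,G(\rmd\ell)\to0$ by dominated convergence (your explicit handling of $\ell=0$ is a small point the paper glosses over). The only difference is the dominating function, and there your ``main obstacle'' does not arise: dominated convergence needs a uniform bound, not uniform decay, and since $R_\ell>R_{\ell_1}$ the hypothesis already gives $f_u(\ell)\le e^{-(R_\ell-R_{\ell_1})u}\le1$ for all $u\ge0$ and $\ell\in(0,\ell_1)$, which is the bound the paper uses, so your appeal to monotonicity of $\psi_{\rm cl}(u,\ell)$ in $\ell$ is valid but unnecessary.
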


\begin{proof}
By Theorem~\ref{thm:light-fixed},
\[
\psi_{\ell_1}(u)\sim C_{\ell_1}\,\psi_{\rm cl}(u,\ell_1).
\]
Since
\[
\psi(u)=p_1\psi_{\ell_1}(u)+\int_{[0,\ell_1)}\psi_\ell(u)\,G(\rmd\ell)
\]
and $\psi_\ell(u)\le\psi_{\rm cl}(u,\ell)$, we obtain for all large $u$
\begin{align*}
&\frac{1}{\psi_{\rm cl}(u,\ell_1)}\int_{[0,\ell_1)}\psi_\ell(u)\,G(\rmd\ell)\\
&\le \frac{2}{c_{\ell_1}}\int_{[0,\ell_1)}e^{-(R_\ell-R_{\ell_1})u}\,G(\rmd\ell)\to0
\end{align*}
by dominated convergence. Hence
\[
\psi(u)\sim p_1C_{\ell_1}\,\psi_{\rm cl}(u,\ell_1).
\]
The same estimate with $\psi_\ell(u)$ replaced by $\psi_{\rm cl}(u,\ell)$ shows that
\[
\psi_{\rm cl}(u)\sim p_1\,\psi_{\rm cl}(u,\ell_1),
\]
and the result follows.
\end{proof}

\par\smallskip\noindent\textit{Remark.} Theorem~\ref{thm:light-mixed} covers the discrete endpoint case. The next theorem treats the complementary situation in which the endpoint is approached through a regular density and the leading term is produced by an $O(1/u)$ neighborhood of $\ell_1$.

\begin{thm}\label{thm:light-sharp}
Assume that $\ell_1<c/\mu$ and $G(\{\ell_1\})=0$. Suppose that there exist $\delta>0$, $B,C_1,D_1,R_1>0$, and $b>0$ such that:
\begin{enumerate}
\item$G$ has a density $g$ on $(\ell_1-\delta,\ell_1)$ and $g(\ell_1-z)\sim Bz^{b-1}$ as $z\downarrow0$;
\item the fixed-intensity theorem holds at $\ell=\ell_1$, with constant $C_{\ell_1}$, and for every $M>0$,
\[
\sup_{0\le v\le M}\left|\frac{\psi_{\rm cl}(u,\ell_1-v/(D_1u))}{C_1e^{-R_1u}e^{-v}}-1\right|\to0,
\]
\[
\sup_{0\le v\le M}\left|\frac{\psi_{\ell_1-v/(D_1u)}(u)}{\psi_{\rm cl}(u,\ell_1-v/(D_1u))}-C_{\ell_1}\right|\to0;
\]
\item the adjustment coefficient $R(\ell)$ exists on $(\ell_1-\delta,\ell_1]$, satisfies $R(\ell_1)=R_1$, is differentiable at $\ell_1$ with $R'(\ell_1)=-D_1$, and there is $\eta>0$ such that $R(\ell)\ge R_1+\eta$ for $\ell\le \ell_1-\delta$.
\end{enumerate}
If $\psi$ is continuous or monotone on $(-\infty,0)$, then
\[
\psi(u)\sim C_{\ell_1}\,\psi_{\rm cl}(u)\sim C_{\ell_1}\frac{BC_1\Gamma(b)}{(D_1u)^b}e^{-R_1u},\qquad u\to\infty.
\]
\end{thm}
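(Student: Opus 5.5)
We split the mixing integral into three pieces: a shrinking window $(\ell_1-M/(D_1u),\ell_1)$, an intermediate range $(\ell_1-\delta,\ell_1-M/(D_1u)]$, and the bulk $[0,\ell_1-\delta]$. The window produces the main term, and the other two pieces are shown to be negligible, first as $u\to\infty$ and then as $M\to\infty$. The same decomposition is applied to $\psi(u)=\int\psi_\ell(u)\,G(\rmd\ell)$ and to $\psi_{\rm cl}(u)=\int\psi_{\rm cl}(u,\ell)\,G(\rmd\ell)$. The only difference is that in the window the integrand for $\psi$ carries the extra factor $C_{\ell_1}$, which comes from the second uniform statement in assumption~2. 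Since $G(\{\ell_1\})=0$, the point $\ell_1$ contributes nothing.

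\textbf{Window.} Substitute $\ell=\ell_1-v/(D_1u)$, so that $G(\rmd\ell)=g(\ell_1-v/(D_1u))\,\rmd v/(D_1u)$. By assumption~1, $g(\ell_1-v/(D_1u))\sim B\,(v/(D_1u))^{b-1}$ uniformly for $v\in(0,M]$. More precisely, the ratio to $B(v/(D_1u))^{b-1}$ tends to $1$ uniformly on that range, because $v/(D_1u)\to0$ uniformly there. Combining this with the two uniform statements in assumption~2 gives
\[
\int_{\text{window}}\psi_\ell(u)\,G(\rmd\ell)=(1+o(1))\,C_{\ell_1}C_1 e^{-R_1u}\frac{B}{(D_1u)^b}\int_0^M v^{b-1}e^{-v}\,\rmd v .
\]
The same formula holds without the factor $C_{\ell_1}$ for $\psi_{\rm cl}$. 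Letting $M\to\infty$ turns the integral into $\Gamma(b)$.

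\textbf{Bulk.} For $\ell\le\ell_1-\delta$ we bound $\psi_\ell(u)\le\psi_{\rm cl}(u,\ell)$. We then want $\psi_{\rm cl}(u,\ell)\le e^{-R(\ell)u}\le e^{-(R_1+\eta)u}$. The first inequality is the Lundberg inequality, which holds whenever the adjustment coefficient exists. For $\ell$ so small that $R(\ell)$ does not exist or is infinite, monotonicity of $\psi_{\rm cl}(u,\cdot)$ in $\ell$ lets us bound by the value at $\ell_1-\delta$. I would read assumption~3 as giving exactly this uniform bound. The bulk is then $O(e^{-(R_1+\eta)u})=o(u^{-b}e^{-R_1u})$.

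\textbf{Intermediate range (main obstacle).} On this range we need a uniform exponential bound. We use the Lundberg inequality $\psi_{\rm cl}(u,\ell)\le e^{-R(\ell)u}$ together with differentiability of $R$ at $\ell_1$. Shrinking $\delta$ if necessary, we get $R(\ell)\ge R_1+\tfrac12 D_1(\ell_1-\ell)$ on $(\ell_1-\delta,\ell_1)$. Shrinking $\delta$ also lets us assume $g(\ell_1-z)\le 2Bz^{b-1}$ there. Substituting $v=D_1u(\ell_1-\ell)$, the intermediate contribution is at most
\[
e^{-R_1u}\frac{2B}{(D_1u)^b}\int_M^\infty v^{b-1}e^{-v/2}\,\rmd v .
\]
This is $\epsilon(M)\,u^{-b}e^{-R_1u}$ with $\epsilon(M)\to0$ as $M\to\infty$. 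The delicate point is that the Lundberg constant equals $1$, which is what makes this bound comparable to $C_1e^{-R_1u}$ without any uniformity in the prefactor. I would check that the Lundberg inequality indeed applies for each fixed $\ell$ whenever $R(\ell)$ exists.

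\textbf{Conclusion.} Combining the three pieces gives
\[
\psi_{\rm cl}(u)\sim\frac{BC_1\Gamma(b)}{(D_1u)^b}e^{-R_1u}.
\]
The same three estimates with the factor $C_{\ell_1}$ in the window give $\psi(u)\sim C_{\ell_1}\psi_{\rm cl}(u)$, since the error pieces for $\psi$ are dominated by those for $\psi_{\rm cl}$. The continuity or monotonicity assumption on $\psi$ is used only through assumption~2 and Theorem~\ref{thm:light-fixed}, and plays no further role here.
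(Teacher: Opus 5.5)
Your proposal is correct and follows the paper's proof. It uses the same Lundberg bound $e^{-(R_1+\eta)u}$ on $[0,\ell_1-\delta]$, the same substitution $v=D_1u(\ell_1-\ell)$, and the same majorant $\frac{2B}{D_1^b}e^{-v/2}v^{b-1}$ built from $R(\ell_1-z)\ge R_1+\tfrac12 D_1z$ and $g(\ell_1-z)\le 2Bz^{b-1}$; the only difference is cosmetic, in that you truncate at $v=M$ and use the uniformity in assumption~2 on $[0,M]$ plus the tail of the majorant, whereas the paper applies dominated convergence directly on the whole range with pointwise convergence.
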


\begin{proof}
Write $\psi(u)=I_0(u)+I_1(u)$, where $I_0$ and $I_1$ are the contributions of $[0,\ell_1-\delta]$ and $(\ell_1-\delta,\ell_1)$, respectively. Since $0\le \psi_\ell(u)\le \psi_{\rm cl}(u,\ell)$ and Lundberg's inequality gives $\psi_{\rm cl}(u,\ell)\le e^{-R(\ell)u}$,
\[
0\le I_0(u)\le e^{-(R_1+\eta)u},
\]
hence $u^be^{R_1u}I_0(u)\to0$. For $I_1$, the change of variables $v=D_1u(\ell_1-\ell)$ yields
\begin{align*}
u^be^{R_1u}I_1(u)
&=\frac{1}{D_1^b}\int_0^{D_1u\delta} e^{R_1u}\psi_{\ell_1-v/(D_1u)}(u)
\\
&\qquad\times \left(\frac{v}{D_1u}\right)^{1-b}g\left(\ell_1-\frac{v}{D_1u}\right)v^{b-1}\,\rmd v.
\end{align*}
For each fixed $v\ge0$, assumption~(2) gives
\begin{align*}
e^{R_1u}\psi_{\ell_1-v/(D_1u)}(u)
&=\frac{\psi_{\ell_1-v/(D_1u)}(u)}{\psi_{\rm cl}(u,\ell_1-v/(D_1u))}
\\
&\quad\times\frac{\psi_{\rm cl}(u,\ell_1-v/(D_1u))}{C_1e^{-R_1u}e^{-v}}
\\
&\quad\times C_1e^{-v}\to C_{\ell_1}C_1e^{-v}.
\end{align*}
while $\left(v/(D_1u)\right)^{1-b}g\left(\ell_1-v/(D_1u)\right)\to B$. Moreover, after shrinking $\delta$ if necessary, differentiability of $R$ at $\ell_1$ gives $R(\ell_1-z)\ge R_1+(D_1/2)z$ for $0<z<\delta$, while $g(\ell_1-z)\le 2Bz^{b-1}$. Since $\psi_\ell(u)\le \psi_{\rm cl}(u,\ell)\le e^{-R(\ell)u}$, the integrand is bounded by
\[
\frac{2B}{D_1^b}e^{-v/2}v^{b-1},
\]
which is integrable on $[0,\infty)$. Dominated convergence yields
\begin{align*}
u^be^{R_1u}I_1(u)
&\to C_{\ell_1}\frac{BC_1}{D_1^b}\int_0^\infty e^{-v}v^{b-1}\,\rmd v
\\
&= C_{\ell_1}\frac{BC_1\Gamma(b)}{D_1^b}.
\end{align*}
Hence
\[
\psi(u)\sim C_{\ell_1}\frac{BC_1\Gamma(b)}{(D_1u)^b}e^{-R_1u}.
\]
The same proof with $\psi_\ell(u)$ replaced by $\psi_{\rm cl}(u,\ell)$ gives
\[
\psi_{\rm cl}(u)\sim \frac{BC_1\Gamma(b)}{(D_1u)^b}e^{-R_1u},
\]
and the ratio statement follows.
\end{proof}

\par\smallskip\noindent\textit{Remark.} The assumptions of Theorem~\ref{thm:light-sharp} are local endpoint regularity conditions and are not intended to be minimal. Assumption~(1) covers beta-type endpoint densities; for instance, $g(\ell)=K(\ell_1-\ell)^{b-1}h(\ell)$ with $h(\ell_1-)>0$ gives $B=Kh(\ell_1-)$. Assumption~(2) is the local uniformity needed in the $O(1/u)$ window of the Laplace method, and is expected when the adjustment coefficient, Cram{\'e}r constant, and limiting overshoot law vary smoothly with the intensity, as in standard exponential or phase-type claim-size models. Assumption~(3) makes intensities bounded away from $\ell_1$ exponentially negligible. Thus the theorem covers regular densities near the maximal admissible intensity; more singular endpoints require separate analysis.

\subsection*{Acknowledgements}
This work was supported by JSPS KAKENHI Grant-in-Aid for Scientific Research (A), Grant Number JP25H00593, JSPS KAKENHI Grant-in-Aid for Scientific Research (C), Grant Numbers JP23K03133 and JP26K06828, and the JSPS Open Partnership Joint Research Projects, Grant Number JPJSBP120209921.
\references

\end{document}